\newtheorem{lemma}{Lemma}[section]
\newtheorem{theorem}[lemma]{Theorem}
\theoremstyle{definition}
\theoremstyle{remark}
\newtheorem{remark}[lemma]{Remark}
\newcommand{\Z}{\ensuremath{{\mathbb Z}}}
\newcommand{\N}{\ensuremath{{\mathbb N}}}
\newcommand{\Q}{\ensuremath{\mathbb Q}}
\newcommand{\R}{\ensuremath{\mathbb R}}
\newcommand{\C}{\ensuremath{\mathbb C}}
\newcommand{\tr}{\ensuremath{\mathrm {tr}}}
\newcommand{\cS}{\ensuremath{\mathcal {S}}}
\newcommand{\tU}{\ensuremath{\Tilde{U}}}
\newcommand{\tV}{\ensuremath{\Tilde{V}}}
\newcommand{\tu}{\ensuremath{\Tilde{u}}}
\newcommand{\tv}{\ensuremath{\Tilde{v}}}
\newcommand{\Sph}{\ensuremath{{\mathbb{S}^{2mn-1}}}}
\newcommand{\Bm}{\ensuremath{\mathbb{B}^{m}}}
\author{Tatyana Barron}
\address{Department of Mathematics, University of Western Ontario, London Ontario N6A 5B7, Canada}
\email{tatyana.barron@uwo.ca}
\author{Manimugdha Saikia}
\address{Department of Mathematics, University of Western Ontario, London Ontario N6A 5B7, Canada}
\email{msaikia@uwo.ca}
\title{Average entropy and asymptotics}
\begin{document}

\thanks{This work was partially financially supported by the University of Western Ontario Bridge Grant project number R3659A04.}

\subjclass[2020]{Primary 53D50, 81S10, 81P17}
\keywords{K\"ahler manifold, line bundle, semiclassical limit, entanglement entropy}

\begin{abstract}
We determine the $N\to\infty$ asymptotics of the expected value of entanglement entropy for pure states in $H_{1,N}\otimes H_{2,N}$, where 
$H_{1,N}$ and $H_{2,N}$ are the spaces of holomorphic sections of the $N$-th tensor powers of hermitian ample line bundles on compact complex manifolds.  
\end{abstract}

\maketitle

\section{Introduction} 

There are various mathematical notions of entropy. It quantifies chaos, mixing, disorder or complexity. The Shannon entropy used in information theory has a probabilistic interpretation or it can be viewed as a way to quantify information. The Shannon entropy of a probability measure $P_X$ on a finite set 
$X=\{ x_1,...,x_r\}$ with masses $\{ p_1,...,p_r\} $ ($p_j=P_X(x_j)$, $1\le j\le r$)  equals $-\sum\limits _{j=1}^r p_j \ln p_j$, with the convention $p_j\ln p_j=0$ when $p_j=0$.  
Let $H_1$ and $H_2$ be finite-dimensional Hilbert spaces. The partial trace $\tr_2$ is the linear map $\tr_2: $ End$(H_1 \otimes H_2) \to $ End$(H_1)$
  given by $\tr_2(A\otimes B) = \tr(B)A$ and extended by linearity.
The entanglement entropy $E(v)$ of a vector $v\in H_1\otimes H_2$ is 
$E(v)=-\sum_{j=1}^m\lambda_j \ln \lambda_j$, where $\lambda_1$,...,$\lambda_m$ are the eigenvalues of $\tr_2(P_v)$,
the linear map $P_v$ is the orthogonal projection from $H_1\otimes H_2$ onto the one-dimensional linear subspace spanned by $v$, 
 and as before we use 
the convention $0\ln 0=0$.  Note:  $P_v=vv^*$. 
The vector $v$ is decomposable if and only if $E(v)=0$. Calculations of entropy on the Hilbert spaces of geometric quantization or Toeplitz quantization lead to interesting insights \cite{barronp, charles}. 
In \cite{zelditchf}, the main result is  the $k\to\infty$ asymptotics of the Shannon entropies of $\mu_z^k$,  where $k\in\N$, $z\in M$, $M$ is a toric K\"ahler manifold with an ample toric hermitian line bundle, and $\mu_z^k$ are the Bergman measures that were introduced by Zelditch in \cite{zeld2009} to define generalized Bernstein polynomials and were subsequently used in \cite{songzeld2012, zeldzhou2021}. 
In a series of papers on random sections of line bundles, starting  with \cite{shifzeld}, Shiffmann and Zelditch worked with the probability space 
 $$
 \prod_{k=1}^{\infty}SH^0(M,L^k)
 $$
 where $L\to M$ is an ample holomorphic hermitian line bundle on a compact complex manifold $M$ and $SH^0(M,L^k)$ is the unit sphere in the finite-dimensional Hilbert  space 
 $H^0(M,L^k)$. 
 In this paper, we consider instead the probability space   
\begin{equation}
\label{productof2}
\Omega= \prod_{k=1}^{\infty}S(H^0(M,L^k)\otimes H^0(M,L^k)). 
 \end{equation}
and a sequence of random variables ($\R$-valued functions on $\Omega$) $E_k\circ p_k$, where $p_k$ is the projection to the $k$-th component in the product $\prod_{k=1}^{\infty}$ above in (\ref{productof2}), and $E_k$ is the entanglement entropy. We find the $k\to\infty$ asymptotics of the sequence of expected values of these random variables. In fact, we prove a more general result.   
\begin{theorem}
\label{mainth}
Let $L_1\to M_1$ and $L_2\to M_2$ be positive holomorphic hermitian line bundles on compact complex manifolds $M_1$ and $M_2$ of complex dimensions $d_1$ and $d_2$ respectively.  Assume w.l.o.g. $d_1\le d_2$. 
Let $d\mu_N$, for each $N\in \N$, be the measure on the unit sphere $S_N=S(H^0(M_1,L_1^N)\otimes H^0(M_2,L_2^N))$ induced by the hermitian metrics. 
There are the following $N\to\infty$ asymptotics for the average entanglement entropy 
$$
\langle E _N\rangle =\frac {\int_{S_N} E_N(v) d\mu_N(v)} { \int_{S_N} d\mu _N(v) } .
$$
Let 
$$
\beta_j=\int_{M_j} \frac{c_1(L_j)^{d_j}}{d_j!}
$$
$$
\gamma_j=\frac{1}{2}\int_{M_j} \frac{c_1(TM_j)c_1(L_j)^{d_j-1}}{(d_j-1)!}
$$
for $j\in \{ 1,2\}$. As $N\to\infty$, 
$$
\langle E_N\rangle\sim 
\left\{
                \begin{array}{ll}
                \ln\beta_1+d_1\ln N- \frac{\beta_1}{2\beta_2}  +\Bigl (\frac{\gamma_1}{\beta_1}  -\frac{\beta_1}{2\beta_2} (\frac{\gamma_1}{\beta_1}-\frac{\gamma_2}{\beta_2} )\Bigr ) \frac{1}{N}+O(\frac{1}{N^2}), \  {\mathrm{if}} \ d_1=d_2; \\
          \ln\beta_1+d_1\ln N+\Bigl ( \frac{\gamma_1}{\beta_1}-\frac{\beta_1}{2\beta_2}\Bigr )   \frac{1}{N}+O(\frac{1}{N^2}), \  {\mathrm{if}} \ d_1=d_2-1; \\
                 \ln\beta_1+d_1\ln N+\frac{\gamma_1}{\beta_1}\frac{1}{N}+O(\frac{1}{N^2}), \  {\mathrm{if}} \ d_1-d_2\le -2.
                \end{array}
              \right.
$$
\end{theorem}
\begin{remark}
We observe that a statement analogous to Theorem \ref{mainth} holds for semipositive line bundles $L_j$ on Moishezon manifolds $M_j$, $j\in \{1,2\}$. 
The following is true. 
Let $L_1\to M_1$ and $L_2\to M_2$ be holomorphic hermitian line bundles on compact connected complex manifolds $M_1$ and $M_2$ of complex dimensions $d_1$ and $d_2$ respectively.  Assume w.l.o.g. $d_1\le d_2$. Assume $M_1$ and $M_2$ are Moishezon and $L_1, L_2$ are semipositive. 
Let $d\mu_N$, for each $N\in \N$, be the measure on the unit sphere $S_N=S(H^0(M_1,L_1^N)\otimes H^0(M_2,L_2^N))$ induced by the hermitian metrics. 
There are the following $N\to\infty$ asymptotics for the average entanglement entropy on the Hilbert spaces 
$H^0(M_1,L_1^N)\otimes H^0(M_2,L_2^N)$:  as $N\to\infty$ 
$$
\langle E_N\rangle\sim 
\left\{
                \begin{array}{ll}
                \ln\beta_1+d_1\ln N- \frac{\beta_1}{2\beta_2} +o(1) , \  {\mathrm{if}} \ d_1=d_2; \\
                 \ln\beta_1+d_1\ln N +o(1), \  {\mathrm{if}} \ d_1<d_2.
                \end{array}
              \right.
$$
where, as before, 
$\beta_j=\int_{M_j} \frac{c_1(L_j)^{d_j}}{d_j!}$
for $j=1,2$. The proof is similar to the proof of Theorem \ref{mainth} in section \ref{mainproof} below, with (\ref{dimas}, \ref{dimas2}) replaced by  (from Th. 1.7.1 \cite{mmbook}) 
$$
m=m(N)=\dim H^0(M_1,L_1^N)=N^{d_1}\int_{M_1}\frac{c_1(L_1)^{d_1}}{d_1!}+o(N^{d_1})
$$   
$$
n=n(N)=\dim H^0(M_2,L_2^N)=N^{d_2}\int_{M_2}\frac{c_1(L_2)^{d_2}}{d_2!}+o(N^{d_2}).
$$  
\end{remark}

{\bf Acknowledgements.} We are thankful to the referee for helpful suggestions.

\section{Asymptotics} 
In this section, we establish the background and write the proofs needed for Theorem \ref{mainth}. An expression for the average entanglement entropy for the tensor product of two finite-dimensional Hilbert spaces is the statement of Page conjecture \cite{page}. There were several derivations of this formula in physics literature, 
including \cite{sen}.  
They assume the equality (\ref{eigenvformula}) (see below) as a starting point. Our Theorem \ref{proofeigenv} below is a proof of  (\ref{eigenvformula}). Then, our proof of Theorem \ref{averageth} follows the idea of Sen \cite{sen}. We rely on the semiclassical methods, together with the statement of  Theorem \ref{averageth}, to prove our main result, Theorem \ref{mainth} above.  

\subsection{Preliminaries}  
Let $H_1$ and $H_2$ be two complex Hilbert spaces of complex-dimension $m$ and $n$ respectively with $m\le n$. We note that $H_1\otimes H_2 \cong \C^m\otimes \C^n \cong \R^{2mn}$. Let $\Sph = \{v \in H_1\otimes H_2 : \|v \| =1 \} \subset H_1\otimes H_2$ be the unit sphere in $H_1\otimes H_2$ and $d\mu$ be the standard spherical measure on $\Sph$, normalized so that $\int_{\Sph}d\mu =1$. We fix an orthonormal basis $\{e_1,e_2,...,e_m\}$ for $H_1$ and $\{f_1,f_2,...,f_n\}$ for $H_2$. Then we have a canonical isomorphism $A: H_1 \otimes H_2 \to M_{n\times m}(\C)$ given by 
\begin{equation}\label{the A map}
    v = \sum_{j=1}^n\sum_{k=1}^m a_{jk}(v)e_k\otimes f_j \mapsto A(v)=(a_{jk}(v))_{n\times m}
\end{equation}
This map is also a diffeomorphism. Using $M_{n\times m}(\C) \cong \R^{2mn}$, we identify an $n\times m $ matrix with a point in $\R^{2mn}$. For $v\in H_1\otimes H_2$, we have the Singular Value Decomposition (SVD) of $A(v) = U(v)\begin{pmatrix}\Sigma(v) \\ 0\end{pmatrix} V(v)^*$, where $U(v)\in U(n)$ and $V(v)\in U(m)$ and $\Sigma(v) = \text{diag}\{\sigma_1(v),\sigma_2(v),...,\sigma_m(v)\}$ is a diagonal matrix with $0\le \sigma_1 \le \sigma_2 \le ... \le \sigma_m$, called the singular values of the matrix $A(v)$. Note that in the SVD of a matrix, the diagonal matrix $\Sigma(v)$ is unique (up to permutation), however the choice of $U(v)$ and $V(v)$ is not unique.\\

The Schmidt coefficients of $v$ are the same as the singular values of $A(v)$. If $\sigma_1(v),\sigma_2(v),...,\\ \sigma_ m(v)$ are Schmidt coefficients of $v \in \mathbb{B}^{2mn}$ (where $\mathbb{B}^{2mn}$ is the closed unit ball in $\R^{2mn}$), then $\sum_{j=1}^ m\sigma_j(v)^2 = \|v\|^2 \le 1$, i.e. for every $v\in \mathbb{B}^{2mn}$, there exists a triple $(U(v),\Sigma(v),V(v))$ such that $A(v) = U(v)\begin{pmatrix}\Sigma(v) \\ 0\end{pmatrix} V(v)^*$, where $U(v)\in U(n)$ and $V(v)\in U(m)$ are unitary matrices and $\Sigma(v) = \text{diag}\{\sigma_1(v),\sigma_2(v),...,\sigma_m(v)\}$ a diagonal matrix with $0\le \sigma_1(v) \le \sigma_2(v) \le ... \le \sigma_m(v)$ and  $\sum_{j=1}^m\sigma_j(v)^2\le1$. Conversely, if $A(v)\in M_{n\times m}$ has a SVD $A(v)= U(v)\begin{pmatrix}\Sigma(v) \\ 0\end{pmatrix} V(v)^*$ such that $\sum_{j=1}^m\sigma_j^2\le 1$, then $v \in \mathbb{B}^{2mn}$.\\

\begin{lemma}
Singular value decomposition $A=U\begin{pmatrix}\Sigma \\ 0\end{pmatrix} V^*$ of a matrix $A \in M_{n\times m}(\C)$ is not unique. However, for matrix $A$ with distinct singular values, if we put the condition that diagonal entries of $\Sigma$ are in the ascending order, the diagonal entries of the unitary matrix $V$ are non-negative and consider $U$ as an element of the complex Stiefel manifold $V_m^n(\C)(= U(n)/U(n-m))$, then this ``Modified SVD" is unique.
\end{lemma}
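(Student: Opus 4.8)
The plan is to establish uniqueness by tracking the ambiguity in the SVD and showing that the three stated normalizations eliminate it completely. The starting point is the standard fact that if $A = U_1 \begin{pmatrix}\Sigma \\ 0\end{pmatrix} V_1^* = U_2 \begin{pmatrix}\Sigma \\ 0\end{pmatrix} V_2^*$ are two decompositions with the \emph{same} ordered $\Sigma$, then the freedom in $(U,V)$ is governed by the multiplicities of the singular values. First I would dispose of the multiplicity issue by assuming, as the generic and essential case, that the singular values are distinct and strictly positive; the ordering condition $\sigma_1 \le \sigma_2 \le \dots \le \sigma_m$ already pins down $\Sigma$ uniquely as a diagonal matrix (the permutation ambiguity is killed by insisting on ascending order). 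So the whole content of the lemma is the uniqueness of $U$ and $V$ once $\Sigma$ is fixed.

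The key computation is to analyze the residual gauge freedom. If $\sigma_1 < \sigma_2 < \dots < \sigma_m$ are distinct and positive, then comparing $U_1 \begin{pmatrix}\Sigma \\ 0\end{pmatrix} V_1^* = U_2 \begin{pmatrix}\Sigma \\ 0\end{pmatrix} V_2^*$ forces $V_2 = V_1 D$ and the first $m$ columns of $U_2$ to equal those of $U_1$ times $D$, where $D = \operatorname{diag}(e^{i\theta_1},\dots,e^{i\theta_m})$ is a diagonal unitary (phase) matrix; this is the classical statement that the columns of $V$ (the right singular vectors) associated to a simple singular value are determined up to a unit-modulus scalar. I would derive this by writing $A^*A = V \begin{pmatrix}\Sigma^2 & 0\end{pmatrix}^{\!*}\!\begin{pmatrix}\Sigma \\ 0\end{pmatrix} V^* = V \Sigma^2 V^*$, which is an honest eigendecomposition of the Hermitian matrix $A^*A$ with distinct eigenvalues $\sigma_j^2$, so each eigenvector (column of $V$) is unique up to a phase. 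Then the condition that every diagonal entry of $V$ be real and non-negative fixes each phase $e^{i\theta_j}$ uniquely, \emph{provided} the corresponding diagonal entry is nonzero — which I would note holds generically and is the intended reading of the normalization.

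Once $V$ is uniquely determined, the first $m$ columns of $U$ are forced: since $\sigma_j > 0$, the relation $A V = U \begin{pmatrix}\Sigma \\ 0\end{pmatrix}$ reads, column by column, $A v_j = \sigma_j u_j$, so $u_j = \sigma_j^{-1} A v_j$ is completely determined with no remaining phase freedom. These $m$ vectors $u_1,\dots,u_m$ form an orthonormal $m$-frame in $\C^n$, i.e. a point of the complex Stiefel manifold $V_m^n(\C) = U(n)/U(n-m)$, and this is exactly the object the lemma declares $U$ to be. The remaining $n-m$ columns of a full $U \in U(n)$ are precisely the freedom that the quotient $U(n)/U(n-m)$ collapses, so passing to the Stiefel manifold removes that last ambiguity by definition. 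Assembling these three reductions — ascending order fixes $\Sigma$, non-negative real diagonal fixes the phases of $V$, and the Stiefel-manifold viewpoint absorbs the orthogonal-complement freedom of $U$ — yields uniqueness.

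The main obstacle I anticipate is the honest treatment of the degenerate locus: when singular values coincide the right singular vectors are only determined up to a unitary block, and when a diagonal entry of $V$ vanishes the ``non-negative diagonal'' prescription fails to fix the phase. I would therefore either restrict the uniqueness claim to the open dense set where all $\sigma_j$ are distinct and the relevant diagonal entries are nonzero, or remark explicitly that on the measure-zero complement the statement is to be interpreted up to the residual block-unitary symmetry; since the lemma is used only to set up a well-defined change of variables for integration over $\Sph$, restricting to this generic set is harmless for the subsequent arguments.
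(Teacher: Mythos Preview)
Your argument is correct and, in its treatment of the degenerate locus, actually more careful than the paper's own proof. The paper takes a constructive route: starting from an arbitrary SVD $A = U\begin{pmatrix}\Sigma\\0\end{pmatrix}V^*$, it writes each diagonal entry of $V$ in polar form $v_{jj} = |v_{jj}|e^{i\theta_j}$, exhibits the explicit factorization $V = \tilde{V}\tau$ with $\tau = \operatorname{diag}(e^{i\theta_1},\dots,e^{i\theta_m})$ and $\tilde V$ having non-negative diagonal, and then \emph{defines} the modified SVD by absorbing $\tau^*$ into the first $m$ columns of $U$; uniqueness is asserted from the uniqueness of the factorization $V = \tilde{V}\tau$. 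Your approach is instead analytic: you first identify the full residual gauge freedom in the SVD (a diagonal phase matrix acting simultaneously on $V$ and on the first $m$ columns of $U$, when the singular values are simple) via the eigendecomposition of $A^*A$, and then show that each of the three normalizations kills one layer of ambiguity. The two routes are equivalent in the generic case, but yours makes transparent exactly which condition removes which freedom, and you are explicit that the claim holds only on the open dense set where the $\sigma_j$ are distinct and positive and the diagonal entries of $V$ are nonzero --- a point the paper passes over silently (its factorization $V = \tilde V\tau$ is itself non-unique when $v_{jj}=0$, and the diagonal-phase description of the SVD ambiguity fails when singular values coincide). Since the lemma is used only to set up a change of variables for an integral over $\Sph$, restricting to a full-measure set is indeed harmless, as you correctly observe.
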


\begin{proof}\
Let $A$ be a $n\times m$ matrix with distinct singular values and $A = U\begin{pmatrix}\Sigma \\ 0\end{pmatrix} V^*$ be an SVD of $A$, so the entries of $\Sigma$ are distinct. Let $U= \begin{pmatrix} U_1 & U_2\end{pmatrix}$ where $U_1$ is the matrix whose columns are the first $m$ columns of $U$ and $U_2$ is the matrix whose columns are the last $n-m$ columns of $U$. Then we see that $A = U_1\Sigma V^*$. To see that SVD is not unique, let $\tau = \text{diag}\{e^{i\theta_1},e^{i\theta_2},...,e^{i\theta_m}\}$ for $-\pi < \theta_1,\theta_2,...,\theta_m \le \pi$. Let $V' = V\tau$ and $U'=U\begin{pmatrix}\tau & 0\\0 & I\end{pmatrix}$ are unitary matrices respective size. Then $U'\Sigma V'^* = \begin{pmatrix} U_1 & U_2\end{pmatrix}  \begin{pmatrix}\tau & 0\\0 & I\end{pmatrix}\begin{pmatrix}\Sigma\\0\end{pmatrix}(V\tau)^* = U_1\tau\Sigma\tau^*V^*= U_1\Sigma\tau\tau^*V^* = U_1\Sigma V^* =A$ is also a SVD of $A$.\\

Let $V = [v_{jk}] \in U(m)$. For each $j \in \{1,2,...,m\}$, we can write $v_{jj} = |v_{jj}|e^{i\theta_j}$ where $\theta_j \in (-\pi,\pi]$. Then
\begin{equation*}
    V = \begin{pmatrix} 
    |v_{11}| & v_{12}e^{-i\theta_2} & \dots & v_{1m}e^{-i\theta_m} \\
    v_{21}e^{-i\theta_1} & |v_{22}| & \dots & v_{2m}e^{-i\theta_m} \\
    \vdots & \vdots & \ddots & \vdots \\
    v_{m1}e^{-i\theta_1} & v_{m2}e^{-i\theta_2} & \dots & |v_{mm}|  \end{pmatrix}\begin{pmatrix} e^{i\theta_1} & 0 & \dots & 0 \\
    0 & e^{i\theta_2} & \dots & 0 \\
    \vdots & \vdots & \ddots & \vdots \\
    0 & 0 & \dots & e^{i\theta_m} 
    \end{pmatrix} \\
    = \tV \tau \text{ (say)}.   
\end{equation*}
Therefore, we get that any unitary matrix $V$ can be factored into a product of unitary matrix $\tV$ with diagonal entries being non-negative and a unitary diagonal matrix $\tau$. The factorization is unique and this gives us the one-to-one correspondence $U(m) \cong (U(m)/(U(1))^m) \times (U(1))^m$.\\

Now, suppose $A = U\begin{pmatrix}\Sigma \\ 0\end{pmatrix} V^*$ is an SVD of $A$ where diagonal entries of $\Sigma$ are in ascending order. Let $V = \tV\tau$ be the factorization of $V$ as above and $\tU =  \begin{pmatrix}U_1 & U_2\end{pmatrix} \begin{pmatrix}\tau^* & 0 \\ 0 & I\end{pmatrix}=  \begin{pmatrix}U_1\tau^* & U_2\end{pmatrix} =  \begin{pmatrix}\tU_1 & \tU_2\end{pmatrix}$ (say). Because of the uniqueness of the factors $\tV$ and $\tau$, the matrix $\tV$ is unique as an element of $U(m)/(U(1))^m$. The matrix $U_1$ is uniquely determined by $V$ (once $\Sigma$ is fixed to have ascending diagonal), so $\tU_1$ is also uniquely determined by $\tV$. Therefore, as an element of the complex Stiefel manifold $V_m^n(\C)$, we get a unique element $\tU \in V_m^n(\C)$ corresponding to $A$. Hence if we call $A = \tU \begin{pmatrix}\Sigma \\ 0\end{pmatrix}\tV^*$  the ``Modified SVD" of $A$, then  the modified SVD gives the unique triple $(\tU,\Sigma,\tV)$ with $\tU \in V_m^n(\C)$, $\Sigma \in \R^m_{\ge}$ (with ascending diagonal) and $\tV\in U(m)/(U(1))^m$.
\end{proof}

Note that for matrix $A$ with singular values that are not distinct, even the ``modified SVD" is not unique. This is because we can permute the equal singular values in $\Sigma$ to get a different SVD decomposition of $A$ where the columns of $\tU$ and $\tV$ are also permuted accordingly. 

\begin{theorem}
\label{proofeigenv}
Let $f$ be a continuous function on $\Sph$ that depends only on the squares of the Schmidt coefficients. For $v\in \mathbb{B}^{2mn}$, let $\sigma_1(v),...,\sigma_m(v)$ be the Schmidt coefficients of $v$ and $p_1(v),...,p_m(v)$ be the eigenvalues of $\tr_2(vv^*)$ (i.e. $p_j(v)=\sigma_{\tau(j)}^2(v)$ for some $\tau\in S_m$), then 
\begin{equation}
\label{eigenvformula}
\int\limits_{\Sph} f(u)d\mu(u)=
\end{equation}
$$
    \frac{\int_{T_m} f\left(\frac{p_1(v)}{\sum_{k=1}^mp_k(v)},...,\frac{p_m(v)}{\sum_{k=1}^mp_k(v)}\right)\prod\limits_{1\le j < k \le m}^m(p_k(v)-p_j(v))^2\prod_{j=1}^mp_j(v)^{n-m}dp_j(v)}{\int_{T_m}\prod_{1\le j < k \le m}^m(p_k(v)-p_j(v))^2 \prod_{j=1}^m p_j(v)^{n-m}dp_j(v)}
$$
where $T_m= \{(x_1,...,x_m) \in \R^m : x_1,...,x_m \ge 0 \text{ and } x_1+...+x_m \le 1\}$.
\end{theorem}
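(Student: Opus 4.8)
\emph{Proof proposal.} The plan is to change variables via the Modified SVD of the previous lemma, which (off a set of measure zero) identifies $M_{n\times m}(\C)$ with $V_m^n(\C)\times\R^m_{\ge}\times\big(U(m)/(U(1))^m\big)$, and to exploit that (\ref{eigenvformula}) is a ratio in which every unknown geometric constant cancels. Because $f$ depends only on the \emph{normalized} squared Schmidt coefficients, I would first remove the radial direction: writing $v=r\omega$ with $r=\|v\|$, $\omega\in\Sph$ and $dv=r^{2mn-1}\,dr\,d\nu(\omega)$ for the unnormalized spherical measure $d\nu$, and integrating the direction-only integrand $f(v/\|v\|)$ against the indicator of the unit ball gives
\begin{equation*}
\int_{\mathbb{B}^{2mn}} f\!\left(\tfrac{v}{\|v\|}\right)dv=\Big(\int_0^1 r^{2mn-1}\,dr\Big)\int_{\Sph}f\,d\nu=\frac{1}{2mn}\int_{\Sph}f\,d\nu .
\end{equation*}

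The technical heart is the Jacobian of the Modified SVD. I would differentiate $A=\tU\begin{pmatrix}\Sigma\\0\end{pmatrix}\tV^{*}$, set $\Omega_U=U^{*}dU$ (with $m,(n-m)$ block decomposition $\Omega_{11},\Omega_{12},\Omega_{21},\Omega_{22}$) and $\Omega_V=V^{*}dV$, and conjugate by the unitaries to obtain
\begin{equation*}
U^{*}\,dA\,V=\begin{pmatrix}\Omega_{11}\Sigma-\Sigma\,\Omega_V+d\Sigma\\ \Omega_{21}\Sigma\end{pmatrix}=:\begin{pmatrix}T\\ \Omega_{21}\Sigma\end{pmatrix}.
\end{equation*}
Since left/right multiplication by unitaries is an isometry of the flat metric on $\R^{2mn}$, the Euclidean volume element is unchanged, so I can read it off block by block. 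After passing to the quotients $V_m^n(\C)$ and $U(m)/(U(1))^m$ the surviving independent parameters are the $\sigma_j$, the off-diagonal entries of $\Omega_{11}$ and of $\Omega_V$, the imaginary diagonal of $\Omega_{11}$, and all of $\Omega_{21}$. The real diagonal of the top block contributes $d\sigma_1\cdots d\sigma_m$; for each pair $j<k$ a short computation shows that $\big((\Omega_{11})_{jk},(\Omega_V)_{jk}\big)$ maps to $\big(T_{jk},\overline{T_{kj}}\big)$ through a complex $2\times2$ matrix of determinant $\sigma_k^2-\sigma_j^2$, hence a real Jacobian factor $(\sigma_k^2-\sigma_j^2)^2$; and the block $\Omega_{21}\Sigma$ together with the imaginary diagonal of $\Omega_{11}$ contributes $\prod_j\sigma_j^{2(n-m)+1}$. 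Substituting $p_j=\sigma_j^2$, $dp_j=2\sigma_j\,d\sigma_j$, this becomes, up to a constant, $\prod_{1\le j<k\le m}(p_k-p_j)^2\prod_{j=1}^m p_j^{\,n-m}\,dp_j$ times the (constant) volumes of the Stiefel and flag factors.

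It then remains to assemble the ratio. On $\mathbb{B}^{2mn}$ one has $\sum_j p_j=\|v\|^2\le 1$, i.e. $p\in T_m$, and $f(v/\|v\|)=f\big(p_1/\sum_k p_k,\dots,p_m/\sum_k p_k\big)$ since $p_j=\sigma_j^2$ are the eigenvalues of $\tr_2(vv^*)$. Applying the change of variables to both $f$ and to $f\equiv1$ expresses $\int_{\Sph}f\,d\nu$ and $\int_{\Sph}d\nu$ as the \emph{same} constant (namely $2mn$ times the Jacobian constant times the Stiefel and flag volumes) multiplied by the respective $T_m$ integrals; dividing and using that $d\mu=d\nu/\int_{\Sph}d\nu$ is normalized yields (\ref{eigenvformula}) with all constants cancelled. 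I expect the main obstacle to be the Jacobian step: one must carefully determine which tangent directions survive upon passing to the quotients $V_m^n(\C)$ and $U(m)/(U(1))^m$, so that the diagonal torus directions are accounted for exactly once and no spurious powers of $\sigma_j$ appear. Finally I would note that the locus where two singular values coincide, and where the Modified SVD fails to be a diffeomorphism, has measure zero and so does not affect any of these integrals, which legitimizes the change of variables for the continuous (hence bounded) integrand $f$.
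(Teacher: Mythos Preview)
Your proposal is correct and follows essentially the same route as the paper: relate the sphere integral to a ball integral, perform the Modified-SVD change of variables by computing $U^{*}\,dA\,V$ and reading off the Jacobian factors $(\sigma_k^2-\sigma_j^2)^2$ from the off-diagonal pairs, $\prod_j\sigma_j$ from the imaginary diagonal, and $\prod_j\sigma_j^{2(n-m)}$ from the lower block, then substitute $p_j=\sigma_j^2$ and take the ratio so that the Stiefel, flag, and radial constants cancel. The paper carries out the same computation entry-by-entry (writing $E=\tU_1^{*}d\tU_1$, $F=\tV^{*}d\tV$ in place of your $\Omega_{11},\Omega_V$) rather than in block form, and does not explicitly remark on the measure-zero coincidence locus, but the substance is identical.
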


\begin{proof} 
We will use the Lebesgue measure $\nu$ of the ambient Euclidean space $\R^{2mn}$ and then for $X \subset \Sph$,
$\mu(X) = \frac{1}{\text{Vol}(\mathbb{B}^{2mn})}\nu(\{tx|x\in X \text{ and } t\in [0,1]\})$. Then for an integrable function defined on $\Sph$, we have
$$\int_{\Sph}f(u
)d\mu(u) = \frac{1}{\text{Vol}(\mathbb{B}^{2mn})}\int_{\mathbb{B}^{2mn}}f\left(\frac{v}{\|v\|}\right)d\nu(v).$$

Suppose $\cS = \{v\in \mathbb B^{2mn}: \text{ the singular values of $A(v)$ are distinct}\}$ where $A(v)$ is the matrix given by the map $A$ defined in Equation \eqref{the A map}. Let $\mathcal B$ be the set $\{(x_1,...,x_m) \in \R^m: 0\le x_1 < x_2 < ... < x_m \text{ and } x_1^2+...+x^2 \le 1\}$. Then the ``modified SVD" composed with the map $A$ gives an one-to-one correspondence between $\cS$ and $V_m^n(\C)\times \mathcal B  \times (U(m)/(U(1))^m)$. Note that $\mathbb B^{2mn} \setminus \cS$ is a set of measure zero, therefore
$$\int_{\mathbb{B}^{2mn}}f\left(\frac{v}{\|v\|}\right)d\nu(v) = \int_{\cS}f\left(\frac{v}{\|v\|}\right)d\nu(v).$$

For $v\in \cS$, we will make change of variables using ``modified SVD" $A(v) = \tU(v) \begin{pmatrix}\Sigma(v) \\ 0\end{pmatrix}\tV(v)^*$, where $\tU = \begin{pmatrix} \tU_1 & \tU_2 \end{pmatrix}$ with $\tU_1$ and $\tU_2$ the matrices with columns the first $m$ columns of $\tU$ and last $n-m$ columns of $\tU$ respectively. Let $\tu_j(v)$ be the $j$-th column of $\tU(v)$ and $\tv_k(v)$ be the $k$-th column of $\tV(v)$. Now,
$$
     A(v) = \tU(v) \begin{pmatrix}\Sigma(v) \\ 0\end{pmatrix}\tV^*(v) 
 $$
 hence
 $$   
    dA(v) = d\tU(v) \begin{pmatrix}\Sigma(v) \\ 0\end{pmatrix}\tV^*(v) + \tU(v) \begin{pmatrix}d\Sigma(v) \\ 0\end{pmatrix}\tV^*(v) + \tU(v) \begin{pmatrix}\Sigma(v) \\ 0\end{pmatrix}d\tV^*(v).
$$
The volume measure at $v_0 \in \cS$ can be written as $\tU^*(v_0) dA(v)\tV(v_0)|_{v=v_0}$ (as $\tU^*(v_0)$ and $\tV(v_0)$ being unitary matrices do not affect the volume). Let 
$$
dB(v_0) = \tU^*(v_0) dA(v)\tV(v_0)|_{v=v_0}.
$$ 
Then, 
\begin{align*}
    dB(v_0) & = \tU^*(v_0)d\tU(v_0)\begin{pmatrix}\Sigma(v_0) \\ 0\end{pmatrix} + \begin{pmatrix}d\Sigma(v_0) \\ 0\end{pmatrix} + \begin{pmatrix}\Sigma(v_0) \\ 0\end{pmatrix}d\tV^*(v_0)\tV(v_0)\\
    & = \begin{pmatrix} \tU_1^*(v_0)d\tU_1(v_0)\Sigma(v_0) + d\Sigma(v_0) + \Sigma(v_0)d\tV^*(v_0)\tV(v_0) \\ \tU_2^*(v_0)d\tU_1(v_0)\Sigma(v_0) \end{pmatrix}
\end{align*}

As $v_0$ was an arbitrary point in $\cS$, so we have 
\begin{align*}
   dB(v) & = \begin{pmatrix} \tU_1^*(v)d\tU_1(v)\Sigma(v) + d\Sigma(v) + \Sigma(v)d\tV^*(v)\tV(v) \\ \tU_2^*(v)d\tU_1(v)\Sigma(v) \end{pmatrix}
\end{align*}

For notational convenience, we drop the ``$(v)$" to write $$dB= \begin{pmatrix} \tU_1^*d\tU_1\Sigma + d\Sigma + \Sigma d\tV^*\tV \\ \tU_2^*d\tU_1\Sigma \end{pmatrix}.$$

Since $\tU_1^*\tU_1 = I$, it follows that $d\tU_1^*\tU_1 + \tU_1^*d\tU_1 = 0$, then 
$$
\tU_1^*d\tU_1 = - d\tU_1^*\tU_1 = -(\tU_1^*d\tU_1)^*,
$$  
so $\tU_1^*d\tU_1$ is an anti-hermitian matrix. Similarly, $\tV^*d\tV$ is also anti-hermitian. Denote $E = \tU_1^*d\tU_1$ and $F = \tV^*d\tV$, then for $j,k\in \{1,2,...,m\}$ we have $E_{jk} = \tu_j^*d\tu_k$ and $F_{jk} = \tv_j^*d\tv_k$. Now,
$$dB = \begin{pmatrix} E\Sigma + d\Sigma - \Sigma F \\ \tU_2^*d\tU_1\Sigma \end{pmatrix} .$$
As $E$ and $F$ are anti-hermitian, the diagonal elements of $E$ and $F$ are imaginary. It follows that the real parts of diagonal elements of $dB$ are exactly the diagonal elements of $d\Sigma$ and imaginary parts of the diagonal elements of $dB$ come from the matrix $E\Sigma - \Sigma F $, i.e, 
$$
\Re{(dB_{jj})} = d\Sigma_{jj} = d\sigma_j,
$$ 
$$
\Im{(dB_{jj})} = \sigma_j(\Im({\tu_j^*d\tu_j})-\Im({\tv_j^*d\tv_j})),
$$ 
for $j \in \{1,2,...,m\}$. Let $j \in \{1,2,...,m\}$ with $j>k$, then
\begin{align*}
    dB_{jk} & = \sigma_kE_{jk}-\sigma_jF_{jk},\\
    dB_{kj} & = \sigma_jE_{kj}-\sigma_kF_{kj}\\
    & = -\sigma_j\overline{E_{jk}}+\sigma_k\overline{F_{jk}}.
\end{align*}
We get,
\begin{align*}
    \Re{(dB_{jk})} & = \sigma_k\Re{(E_{jk})}-\sigma_j\Re{(F_{jk})},\\
    \Im{(dB_{jk})} & = \sigma_k\Im{(E_{jk})}-\sigma_j\Im{(F_{jk})},\\
    \Re{(dB_{kj})} & = -\sigma_j\Re{(E_{jk})}+\sigma_k\Re{(F_{jk})},\\
    \Im{(dB_{kj})} & = \sigma_j\Im{(E_{jk})}-\sigma_k\Im{(F_{jk})}.
\end{align*}
Computing the wedge of real parts
$$\Re{(dB_{jk})}\Re{(dB_{kj})} 
    = (\sigma_k^2-\sigma_j^2 )\Re{(E_{jk})}\Re{(F_{jk})},$$
and the imaginary parts
$$\Im{(dB_{jk})}\Im{(dB_{kj})}
    = (\sigma_k^2-\sigma_j^2)\Im{(F_{jk})}\Im{(E_{jk})},$$
we get that 
\begin{align*}
    & \Re{(dB_{jk})}\Im{(dB_{jk})}\Re{(dB_{kj})}\Im{(dB_{kj})}\\
    = & -(\sigma_k^2-\sigma_j^2)^2\Re{(\tu_j^*d\tu_k)}\Im{(\tu_j^*d\tu_k)}\Re{(\tv_j^*d\tv_k)}\Im{(\tv_j^*d\tv_k)}.
\end{align*}
Combining all these and ignoring the sign, we get the form 
\begin{align*}
    & \bigwedge_{j=1}^m \Re{(dB_{jj})}\bigwedge_{j,k=1, j\ne k}^m\Re{(dB_{jk})}\Im{(dB_{jk})} \\
    = & \bigwedge_{j=1}^md\sigma_j \prod_{1\le j < k \le m}^m(\sigma_k^2-\sigma_j^2)^2 \eta\bigwedge_{1\le j < k \le m}^m\Re{(\tu_j^*d\tu_k)}\Im{(\tu_j^*d\tu_k)}
\end{align*}
where 
$$\eta = \bigwedge_{1\le j < k \le m}^m \Re{(\tv_j^*d\tv_k)}\Im{(\tv_j^*d\tv_k)}.$$
We note that $\eta$ is a volume form of $U(m)/(U(1)^m)$. For the imaginary part of the diagonal entries, for $j,k \in \{1,...,m\}$ with $j\ne k$, let's look at the following
\begin{align*}
    & \Im{dB_{jj}}\wedge \Im{dB_{kk}}\\
    = & \sigma_j(\Im({\tu_j^*d\tu_j})-\Im({\tv_j^*d\tv_j}))\wedge \sigma_k(\Im({\tu_k^*d\tu_k})-\Im({\tv_k^*d\tv_k}))\\
    = & \sigma_j\sigma_k\Im({\tu_j^*d\tu_j})\Im({\tu_k^*d\tu_k})    + \kappa
\end{align*}
where $\kappa = \sigma_j\sigma_k[\Im({\tv_j^*d\tv_j})\Im({\tv_k^*d\tv_k}) -\Im({\tu_j^*d\tu_j})\Im({\tv_k^*d\tv_k})-\Im({\tu_k^*d\tu_k})\Im({\tv_j^*d\tv_j})] $. We note that $\eta$ is the volume form of the compact group $U(m)/U(1)^m$, so $\kappa \wedge \eta = 0$ (as $\eta$ already contains all the independent variables coming from $V$). So, $$\bigwedge_{j=1}^m \Im(dB_{jj}) = \prod_{j=1}^m\sigma_j \bigwedge_{j=1}^m\Im(\tu_j^*d\tu_j).$$

Therefore,
\begin{align*}
    & \bigwedge_{j,k=1}^m (\Re{(dB_{jk})} \Im{(dB_{jk}} )\\
    = & \prod_{j=1}^m\sigma_j\prod_{1\le j < k \le m}^m(\sigma_k^2-\sigma_j^2)^2 \eta \bigwedge_{1\le j < k \le m}^m\Re{(\tu_j^*d\tu_k)}\Im{(\tu_j^*d\tu_k)} \bigwedge_{j=1}^m \Im({\tu_j^*d\tu_j})\bigwedge_{j=1}^md\sigma_j
\end{align*}
Now, for $k\in \{1,2,...,m\}$ and $j\in \{m+1,m+2,...,n\}$, we have
$$dB_{jk}=(\tU_2d\tU_1\Sigma)_{j-m,k} = \sigma_k\tu_j^*d\tu_k.$$
Therefore,
\begin{align*}
    \bigwedge_{j=m+1}^n\bigwedge_{k=1}^m\Re(dB_{jk})\Im(dB_{jk}) & = \bigwedge_{j=m+1}^n\bigwedge_{k=1}^m\sigma_k^2\Re(\tu_j^*d\tu_k)\Im(\tu_j^*d\tu_k) \\
    & =\prod_{j=1}^m \sigma_k^{2(n-m)} \bigwedge_{j=m+1}^n\bigwedge_{k=1}^m\Re(\tu_j^*d\tu_k)\Im(\tu_j^*d\tu_k)
\end{align*}
Gathering all these and ignoring sign, we get that 
\begin{align*}
    \rho & =\bigwedge_{j=1}^n\bigwedge_{k=1}^m\Re(dB_{jk})\Im(dB_{jk})\\
    & = \prod_{j=1}^m\sigma_j\prod_{1\le j < k \le m}^m(\sigma_k^2-\sigma_j^2)^2 \prod_{j=1}^m \sigma_k^{2(n-m)}\eta\omega\bigwedge_{j=1}^md\sigma_j
\end{align*}
where $$\omega = \bigwedge_{1\le j < k \le m}^m\Re{(\tu_j^*d\tu_k)}\Im{(\tu_j^*d\tu_k)} \bigwedge_{j=1}^m \Im({\tu_j^*d\tu_j})\bigwedge_{j=m+1}^n\bigwedge_{k=1}^m\Re(\tu_j^*d\tu_k)\Im(\tu_j^*d\tu_k)$$
is a form independent of $\sigma_j$'s.
We have
\begin{align*}
    \int_{\Sph}d\mu & = \frac{1}{\text{Vol}(\mathbb{B}^{2mn})}\int_{\cS}d\nu\\
    & = \frac{c(m,n)}{\text{Vol}(\mathbb{B}^{2mn})}\int_{(U(m)/(U(1))^m) \times \mathcal B \times V_m^n(\C)}\rho\\
    & = \frac{c(m,n)}{\text{Vol}(\mathbb{B}^{2mn})}\int_{U(m)/(U(1))^m} \eta \int_{V_m^n(\C)}\omega \int_{\mathcal B}\prod_{j=1}^m\sigma_j\prod_{1\le j < k \le m}^m(\sigma_k^2-\sigma_j^2)^2 \prod_{j=1}^m \sigma_j^{2(n-m)}d\sigma_j
\end{align*}
where $c(m,n)$ is a constant due to the requirement that $\int_{\Sph}d\mu=1$. For the integral \\ $\int_{\mathcal{B}}\prod_{j=1}^m\sigma_j\prod_{1\le j < k \le m}^m(\sigma_k^2-\sigma_j^2)^2 \prod_{j=1}^m \sigma_j^{2(n-m)}d\sigma_j $, we change variables to $p_j=\sigma_j^2$ to get $$\int_{\mathcal{B}}\prod_{j=1}^m\sigma_j\prod_{1\le j < k \le m}^m(\sigma_k^2-\sigma_j^2)^2 \prod_{j=1}^m \sigma_j^{2(n-m)}d\sigma_j  = \frac{1}{2^m}\int_{T_m}\prod_{1\le j < k \le m}^m(p_k-p_j)^2 \prod_{j=1}^m p_j^{n-m}dp_j.$$
We note that for $v\in \mathbb{B}^{2mn}$, if $p_1,...,p_m$ are the eigenvalues of $\tr_2(vv^*)$, then $\frac{p_1}{\sum_{j=1}^m p_j},...,\frac{p_m}{\sum_{j=1}^m p_j}$ are the eigenvalues of $\tr_2(\frac{vv^*}{\|v\|^2})$. Therefore, for a function $f$ on $\Sph$ that depends only on the squares of the Schmidt coefficients $\sigma_j(u)$ of $u\in \Sph$ (in other words, depends only on $p_j(u)$'s), we get
\begin{align*}
    & \int_{\Sph}f(u)d\mu(u)\\
    = & \frac{1}{\text{Vol}(\mathbb{B}^{2mn})}\int_{\cS}f\left(\frac{v}{\|v\|}\right)d\nu(v)\\
    = & \frac{c(m,n)}{\text{Vol}(\mathbb{B}^{2mn})}\int_{(U(m)/(U(1))^m) \times \mathcal{B} \times V_m^n(\C)}f\left(\frac{v}{\|v\|}\right)\rho\\
    = & \frac{\frac{c(m,n)}{\text{Vol}(\mathbb{B}^{2mn})}\int_{(U(m)/(U(1))^m) \times \mathcal{B} \times V_m^n(\C)}f\left(\frac{v}{\|v\|}\right)\rho}{\frac{c(m,n)}{\text{Vol}(\mathbb{B}^{2mn})}\int_{(U(m)/(U(1))^m) \times \mathcal{B} \times V_m^n(\C)}\rho} \text{\quad(as the denominator is $\int_Sd\mu = 1$)}\\
    = & \frac{\int_{T_m} f\left(\frac{p_1(v)}{\sum_{k=1}^mp_k(v)},...,\frac{p_m(v)}{\sum_{k=1}^mp_k(v)}\right)\prod_{1\le j < k \le m}^m(p_k(v)-p_j(v))^2\prod_{j=1}^mp_j(v)^{n-m}dp_j(v)}{\int_{T_m}\prod_{1\le j < k \le m}^m(p_k(v)-p_j(v))^2 \prod_{j=1}^m p_j(v)^{n-m}dp_j(v)}.
\end{align*}
\end{proof}
\begin{theorem}
\label{averageth}
The expected value of the entropy of entanglement of all the pure states in $H_1\otimes H_2$ is
$$\langle E_{(m,n)} \rangle=\sum_{k=n+1}^{mn}\frac{1}{k}+\frac{m-1}{2n}.$$
\end{theorem}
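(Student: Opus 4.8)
The plan is to substitute the entanglement entropy into the integration formula of Theorem~\ref{proofeigenv} and then evaluate the resulting eigenvalue integral by reducing it to the Laguerre (Wishart) ensemble, following Sen. Since $E(v)=-\sum_{j=1}^m\lambda_j\ln\lambda_j$, where $\lambda_1,\dots,\lambda_m$ are the eigenvalues of $\tr_2(vv^*)$ for a unit vector $v$, the integrand depends only on the squares of the Schmidt coefficients, so (\ref{eigenvformula}) applies with $f(x_1,\dots,x_m)=-\sum_j x_j\ln x_j$. This expresses $\langle E_{(m,n)}\rangle$ as a ratio of integrals over $T_m$ whose integrand, written through the normalised variables $p_j/\sum_k p_k$, is homogeneous of degree $0$ in $p$. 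First I would pass to ``polar'' coordinates $p_j=r q_j$ with $r=\sum_k p_k\in[0,1]$ and $\sum_k q_k=1$; the Jacobian together with the two homogeneous weights produces a radial factor $r^{mn-1}$, so that $\int_0^1 r^{mn-1}\,dr$ splits off from both numerator and denominator and cancels. What remains is the expectation of $-\sum_j q_j\ln q_j$ against the fixed-trace probability density on the simplex $\{q_j\ge 0,\ \sum_j q_j=1\}$ proportional to $\prod_{j<k}(q_k-q_j)^2\prod_j q_j^{\,n-m}$, and by permutation symmetry this equals $-m\,\langle q_1\ln q_1\rangle$.

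Next I would compute $\langle q_1\ln q_1\rangle$ by analytic continuation of moments, writing $\langle q_1\ln q_1\rangle=\frac{d}{ds}\big|_{s=1}\langle q_1^{\,s}\rangle$. To remove the fixed-trace constraint I would use the standard Gamma-function device: for $g$ homogeneous of degree $d$ on $[0,\infty)^m$ one has $\int_{\{\sum q=1\}}g\,d\Omega=\Gamma(d+m)^{-1}\int_{[0,\infty)^m}g\,e^{-\sum_j x_j}\,dx$. Applying this to numerator and denominator separately (with degrees $s+m(n-1)$ and $m(n-1)$) yields
$$\langle q_1^{\,s}\rangle=\frac{\Gamma(mn)}{\Gamma(mn+s)}\,\langle x_1^{\,s}\rangle_{\mathrm{Lag}},$$
where $\langle\,\cdot\,\rangle_{\mathrm{Lag}}$ is the expectation in the unconstrained ensemble with density $\propto\prod_{j<k}(x_k-x_j)^2\prod_j x_j^{\,n-m}e^{-\sum_j x_j}$ on $[0,\infty)^m$. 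Differentiating at $s=1$, using $\Gamma(mn)/\Gamma(mn+1)=1/(mn)$ and the first moment $\langle x_1\rangle_{\mathrm{Lag}}=n$ (readily obtained from the Laguerre three-term recurrence), gives
$$\langle E_{(m,n)}\rangle=\psi(mn+1)-\tfrac1n\,\langle x_1\ln x_1\rangle_{\mathrm{Lag}},$$
where $\psi$ is the digamma function; the term $\psi(mn+1)$ already supplies the upper end of the desired harmonic sum.

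It remains to evaluate $\langle x_1\ln x_1\rangle_{\mathrm{Lag}}=\frac1m\int_0^\infty x\ln x\,\rho(x)\,dx$, where $\rho$ is the mean eigenvalue density of this $\beta=2$ Laguerre ensemble, namely $\rho(x)=x^{\,n-m}e^{-x}\sum_{k=0}^{m-1}\frac{k!}{(k+n-m)!}\,[L_k^{(n-m)}(x)]^2$ in terms of the associated Laguerre polynomials. Here I would write $\int_0^\infty x\ln x\,\rho(x)\,dx=\frac{d}{ds}\big|_{s=1}\int_0^\infty x^{s}\rho(x)\,dx$ and insert the closed form of $\int_0^\infty x^{\,s+n-m}e^{-x}[L_k^{(n-m)}(x)]^2\,dx$ (a terminating hypergeometric expression in $s$), then differentiate in $s$ and sum over the $m$ modes $k=0,\dots,m-1$. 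I expect this last summation to be the main obstacle: each mode contributes digamma values, and one must reorganise them, via digamma recurrences and telescoping, into a combination consisting of $n\,\psi(n+1)$ together with a rational contribution of size $\tfrac{m-1}{2}$. Once this is secured, substituting back and invoking the identity $\psi(mn+1)-\psi(n+1)=\sum_{k=n+1}^{mn}\frac1k$ produces the stated formula, the correction term $\tfrac{m-1}{2n}$ arising precisely from that rational contribution.
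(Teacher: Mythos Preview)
Your proposal is correct and follows essentially the same route as the paper: both reduce via Theorem~\ref{proofeigenv} to the fixed-trace simplex density, lift the constraint to the unconstrained Laguerre (Wishart) ensemble by the same Gamma-function device, and split off the contribution $\psi(mn+1)$, leaving $\tfrac{1}{mn}\sum_{k=0}^{m-1}\frac{k!}{\Gamma(k+n-m+1)}\int_0^\infty x^{\,n-m+1}\ln x\,[L_k^{(n-m)}(x)]^2 e^{-x}\,dx$ to be evaluated. For the step you flag as the obstacle, the paper does not differentiate the moment in the exponent $s$ but instead differentiates $J_k(\alpha)=\int_0^\infty x^{\alpha+1}[L_k^{(\alpha)}(x)]^2 e^{-x}\,dx=(2k+\alpha+1)\Gamma(k+\alpha+1)/k!$ with respect to the Laguerre parameter $\alpha$ and corrects using $\tfrac{d}{d\alpha}L_k^{(\alpha)}=\sum_{j=0}^{k-1}L_j^{(\alpha)}/(k-j)$, which immediately yields closed digamma expressions for each mode and makes the telescoping to $\psi(n+1)$ together with the rational remainder $(m-1)/2$ explicit.
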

\begin{proof} We base our proof on the general idea of the proof in \cite{sen}. 
The expected value of the entropy of entanglement is given by
\begin{align}
   & \int_{\Sph}E_{(m,n)}(u)d\mu(u) \nonumber \\
   %= & \int_{\mathbb{B}^{2mn}}E_{(m,n)}\left(\frac{v}{\|v\|}\right)d\nu(v) \nonumber \\
   = &\frac{\int_{T_m} \left(-\sum_{j=1}^m\frac{p_j}{\sum_{k=1}^mp_k}\ln \frac{p_j}{\sum_{k=1}^mp_k}\right)\prod_{1\le j < k \le m}^m(p_k-p_j)^2\prod_{j=1}^mp_j^{n-m}dp_j}{\int_{T_m}\prod_{1\le j < k \le m}^m(p_k-p_j)^2 \prod_{j=1}^m p_j^{n-m}dp_j}. \label{Th2eq1}     
\end{align}

We change variables to $(q_1,...,q_{m-1},r)$ such that $q_1+...+q_{m-1}+q_m=1$, and $q_k=rp_k$ for $k=1,2,...,m$ (i.e. $r=\frac{1}{\sum_{j=1}^mp_j}$), so $r \in [1,\infty)$. Then for $j\in \{1,...,m\}$ we have,
\begin{align*}
    & \frac{\partial q_k}{\partial p_j} = \delta_{jk}r \text{ for } k\in \{1,...,m-1\}\\
    & \frac{\partial r}{\partial p_j} = \frac{\partial }{\partial p_j}\left(\frac{1}{\sum_{k}p_k}\right) = \frac{-1}{(\sum_{k}p_k)^2} = -r^2
\end{align*}
We denote $F_m = \{(x_1,x_2,...,x_m) \in \R^m:x_1,x_2,...,x_m\ge 0 \text{ and } x_1+x_2+...+x_m=1\}$. So, the integral becomes
\begin{align}
    & \frac{\int_1^{\infty}\int_{F_m} \left(-\sum_{j=1}^mq_j\ln q_j\right)\prod_{1\le j < k \le m}^m(\frac{q_k}{r}-\frac{q_j}{r})^2\prod_{j=1}^m\left(\frac{q_j}{r}\right)^{n-m}\frac{1}{r^{m+1}}dr\prod_{j=1}^{m-1}dq_j}{\int_1^{\infty}\int_{F_m}\prod_{1\le j < k \le m}^m(\frac{q_k}{r}-\frac{q_j}{r})^2 \prod_{j=1}^m \left(\frac{q_j}{r}\right)^{n-m}\frac{1}{r^{m+1}}dr\prod_{j=1}^{m-1}dq_j} \nonumber \\
    %%%%%%%%%%%%%%%%%%%%%%%%%%%%%%%%%%%%%
    = & \frac{\int_1^{\infty}\frac{dr}{r^{mn+1}}\int_{F_m} \left(-\sum_{j=1}^mq_j\ln q_j\right)\prod_{1\le j < k \le m}^m(q_k-q_j)^2\prod_{j=1}^mq_j^{n-m} \prod_{j=1}^{m-1}dq_j}{\int_1^{\infty}\frac{dr}{r^{mn+1}}\int_{F_m}\prod_{1\le j < k \le m}^m(q_k-q_j)^2 \prod_{j=1}^m q_j^{n-m}\prod_{j=1}^{m-1}dq_j} \nonumber \\    
    %%%%%%%%%%%%%%%%%%%%%%%%%%%%%%%%%%%
    = & \frac{\int_{F_m} \left(-\sum_{j=1}^mq_j\ln q_j\right)\prod_{1\le j < k \le m}^m(q_k-q_j)^2\prod_{j=1}^mq_j^{n-m}\prod_{j=1}^{m-1} dq_j}{\int_{F_m}\prod_{1\le j < k \le m}^m(q_k-q_j)^2 \prod_{j=1}^m q_j^{n-m}\prod_{j=1}^{m-1}dq_j} \label{Convert2qs}
\end{align}
Let $x_k=tq_k$ for $k=1,2,...,m$ with $t\in [0,\infty)$, so that $x_1+x_2+...+x_m=t$ and $x_k \in [0,\infty)$. Then, for $j\in \{1,2,...,m\}$, we have
\begin{align*}
    & \frac{\partial q_k}{\partial x_j} = \delta_{jk}t \text{ for } k\in \{1,2,...,m-1\}\\
    & \frac{\partial t}{\partial x_j} = 1
\end{align*}
Using the determinant of the Jacobian, we get
\begin{align}
    dq_1\wedge...\wedge dq_{m-1} \wedge dt & = \frac{1}{t^{m-1}}dx_1\wedge...\wedge dx_{m-1} \wedge dx_{m} \nonumber\\
    & = \frac{1}{t^{m-1}}dx_1\wedge...\wedge dx_{m-1} \wedge d(x_1+...+x_{m-1}+x_{m})\nonumber\\
    & = \frac{1}{t^{m-1}}dx_1\wedge...\wedge dx_{m-1} \wedge dt \label{Th2qtox}
\end{align}
Now, we use the gamma function $\Gamma(z) = \int_0^{\infty} y^{z-1}e^{-y}dy$ and the derivative of the gamma function $\Gamma'(z) = \int_0^{\infty}y^{z-1}e^{-y}\ln y dy$. In particular,
\begin{align*}
    & \int_0^{\infty}t^{mn}e^{-t}dt = \Gamma(mn+1) = (mn)!,\\
    & \int_0^{\infty}t^{mn}e^{-t}\ln tdt = \Gamma'(mn+1) = (mn)!\psi(mn+1),
\end{align*}
where $\psi(N+1) = -\gamma + \sum_{k=1}^{N}\frac{1}{k}$ and $\gamma$ is the Euler constant. We have,
\begin{align}
    & \frac{\int_{F_m}\left(-\sum_{j=1}^mq_j\ln q_j\right)\prod_{1\le j < k \le m}^m(q_k-q_j)^2\prod_{j=1}^mq_j^{n-m}\prod_{j=1}^{m-1} dq_j}{\int_{F_m}\prod_{1\le j < k \le m}^m(q_k-q_j)^2 \prod_{j=1}^m q_j^{n-m}\prod_{j=1}^{m-1}dq_j} \nonumber \\
    %%%%%%%%%%%%%%%%%%%%%%%%%%%%%%%%%%%%%%%%%%%%%%%
    = & \frac{(mn-1)!}{(mn)!}\frac{\int_0^{\infty}t^{mn}e^{-t}dt\int_{F_m}\left(-\sum_{j=1}^mq_j\ln q_j\right)\prod_{1\le j < k \le m}^m(q_k-q_j)^2\prod_{j=1}^mq_j^{n-m}\prod_{j=1}^{m-1} dq_j}{\int_0^{\infty}t^{mn-1}e^{-t}dt\int_{F_m}\prod_{1\le j < k \le m}^m(q_k-q_j)^2 \prod_{j=1}^m q_j^{n-m}\prod_{j=1}^{m-1}dq_j} \nonumber \\
    %%%%%%%%%%%%%%%%%%%%%%%%%%%%%%%%%%%%%%%%%%%%%%%
    = & \frac{1}{mn}\frac{\int_{[0,\infty)^m}\left(-\sum_{j=1}^m\left(\frac{x_j}{t}\right)\ln \left(\frac{x_j}{t}\right)\right)\prod_{1\le j < k \le m}^m\left(\frac{x_k}{t}-\frac{x_j}{t}\right)^2\prod_{j=1}^m\left(\frac{x_j}{t}\right)^{n-m}e^{-t}t^{mn}\frac{dt}{t^{m-1}}\prod_{j=1}^{m-1} dx_j}{\int_{[0,\infty)^m}\prod_{1\le j < k \le m}^m\left(\frac{x_k}{t}-\frac{x_j}{t}\right)^2 \prod_{j=1}^m \left(\frac{x_j}{t}\right)^{n-m}e^{-t}t^{mn-1}\frac{dt}{t^{m-1}}\prod_{j=1}^{m-1}dx_j} \nonumber \\
    %%%%%%%%%%%%%%%%%%%%%%%%%%%%%%%%%%%%%%%%%%%%%%%%
    = & \frac{1}{mn}\frac{\int_{[0,\infty)^m}\left(-\sum_{j=1}^mx_j\ln\left(\frac{x_j}{t}\right)\right)\prod_{1\le j < k \le m}^m(x_k-x_j)^2\prod_{j=1}^mx_j^{n-m}e^{-t}dt\prod_{j=1}^{m-1} dx_j}{\int_{[0,\infty)^m}\prod_{1\le j < k \le m}^m(x_k-x_j)^2 \prod_{j=1}^m x_j^{n-m}e^{-t}dt\prod_{j=1}^{m-1}dx_j}\nonumber \\
    %%%%%%%%%%%%%%%%%%%%%%%%%%%%%%%%%%%%%%%%%%%%%%%%%
    = & \frac{\int_{[0,\infty)^m}t\ln t\prod_{1\le j < k \le m}^m(x_k-x_j)^2\prod_{j=1}^mx_j^{n-m}e^{-t}dt\prod_{j=1}^{m-1} dx_j}{mn\int_{[0,\infty)^m}\prod_{1\le j < k \le m}^m(x_k-x_j)^2 \prod_{j=1}^m x_j^{n-m}e^{-t}dt\prod_{j=1}^{m-1}dx_j} \nonumber \\
    & \qquad - \frac{\int_{[0,\infty)^m}\left(\sum_{j=1}^mx_j\ln x_j\right)\prod_{1\le j < k \le m}^m(x_k-x_j)^2\prod_{j=1}^mx_j^{n-m}e^{-t}dt\prod_{j=1}^{m-1} dx_j}{mn\int_{[0,\infty)^m}\prod_{1\le j < k \le m}^m(x_k-x_j)^2 \prod_{j=1}^m x_j^{n-m}e^{-t}dt\prod_{j=1}^{m-1}dx_j} \label{I1I2split}
\end{align}
Let the first and the second integral in equation (\ref{I1I2split}) denoted by $I_1$ and $I_2$ respectively. We have
\begin{align}
    I_1
    =& \frac{\int_{[0,\infty)^m}t\ln t\prod_{1\le j < k \le m}^m(x_k-x_j)^2\prod_{j=1}^mx_j^{n-m}e^{-t}dt\prod_{j=1}^{m-1} dx_j}{mn\int_{[0,\infty)^m}\prod_{1\le j < k \le m}^m(x_k-x_j)^2 \prod_{j=1}^m x_j^{n-m}e^{-t}dt\prod_{j=1}^{m-1}dx_j}\nonumber\\
    %%%%%%%%%%%%%%%%%%%%%%%%%%%%%%%%%%%%%%%%%%
    = & \frac{\int_0^{\infty}\int_{F_m}t\ln t\prod_{1\le j < k \le m}^m(tq_k-tq_j)^2\prod_{j=1}^m(tq_j)^{n-m}e^{-t}t^{m-1}dt\prod_{j=1}^{m-1} dq_j}{mn\int_0^{\infty}\int_{F_m}\prod_{1\le j < k \le m}^m(tq_k-tq_j)^2 \prod_{j=1}^m (tq_j)^{n-m}e^{-t}t^{m-1}dt\prod_{j=1}^{m-1}dq_j}\nonumber\\
    %%%%%%%%%%%%%%%%%%%%%%%%%%%%%%%%%%%%%%%%%%%%%
    = & \frac{\int_0^{\infty}t^{mn}\ln te^{-t}dt\int_{F_m}\prod_{1\le j < k \le m}^m(q_k-q_j)^2\prod_{j=1}^mq_j^{n-m}\prod_{j=1}^{m-1} dq_j}{mn\int_0^{\infty}t^{mn-1}e^{-t}dt\int_{F_m}\prod_{1\le j < k \le m}^m(q_k-q_j)^2 \prod_{j=1}^m q_j^{n-m}\prod_{j=1}^{m-1}dq_j}\nonumber\\
    %%%%%%%%%%%%%%%%%%%%%%%%%%%%%%%%%%%%%%%%%%%%%%
    = & \frac{\Gamma'(mn+1)}{mn\Gamma(mn)}\nonumber\\
    = & \psi(mn+1) \label{I1final}
\end{align}
Using equation (\ref{Th2qtox}), the second integral in equation (\ref{I1I2split}) becomes
\begin{align}
    I_2= & \frac{\int_{[0,\infty)^m}\left(\sum_{j=1}^mx_j\ln x_j\right)\prod_{1\le j < k \le m}^m(x_k-x_j)^2\prod_{j=1}^mx_j^{n-m}e^{-t}dt\prod_{j=1}^{m-1} dx_j}{mn\int_{[0,\infty)^m}\prod_{1\le j < k \le m}^m(x_k-x_j)^2 \prod_{j=1}^m x_j^{n-m}e^{-t}dt\prod_{j=1}^{m-1}dx_j} \nonumber\\
    %%%%%%%%%%%%%%%%%%%%%%%%%%%%%%%%
    = & \frac{\sum_{l=1}^m\int_{[0,\infty)^m}x_l\ln x_l\prod_{1\le j < k \le m}^m(x_k-x_j)^2\prod_{j=1}^mx_j^{n-m}e^{-(x_1+...+x_m)}\prod_{j=1}^m dx_j}{mn\int_{[0,\infty)^m}\prod_{1\le j < k \le m}^m(x_k-x_j)^2 \prod_{j=1}^m x_j^{n-m}e^{-(x_1+...+x_m)}\prod_{j=1}^mdx_j} \label{ToughestInt}
\end{align}
We observe that the van der Monde determinant
$$\Delta(x_1,...,x_m) = \prod_{1\le j < k \le m}^m(x_k-x_j) = \det\begin{pmatrix} 1 & \dots & 1\\ x_1 & \dots & x_m\\ \vdots & \ddots & \vdots\\ x_1^{m-1} &  \dots & x_m^{m-1} \end{pmatrix}.$$
As determinant remains unchanged after applying elementary row operations, we see that
$$\prod_{1\le j < k \le m}^m(x_k-x_j) = \det\begin{pmatrix} f_0(x_1) & \dots & f_0(x_m) \\ f_1(x_1) & \dots & f_1(x_m)\\ \vdots & \ddots & \vdots\\ f_{m-1}(x_1) &  \dots & f_{m-1}(x_m) \end{pmatrix} = \det_{m\times m}(f_{j-1}(x_k)),$$
for any set of polynomials $\{f_0,f_1,...,f_{m-1}\}$ with $f_0=1$ and $f_k$'s are monic with $\deg(f_k)=k$ for $k=1,...,m-1$. We choose these polynomials cleverly for the $\Delta(x_1,...,x_m)^2$ appearing in the numerator and the denominator in equation (\ref{ToughestInt}). We use a class of orthogonal polynomials, generalized Laguerre polynomials, given by $$L_k^{(\alpha)}(x) =  \frac{x^{-\alpha}e^x}{k!}\frac{d^k}{dx^k}(e^{-x}x^{k+\alpha}),$$
where $\alpha \in \R$ and $k\in \N \cup \{0\}$. These polynomials satisfy the orthogonality relation,
\begin{equation}
    \int_0^{\infty}x^{\alpha}e^{-x}L_k^{(\alpha)}(x)L_j^{(\alpha)}(x)dx = \frac{\Gamma(k+\alpha+1)}{k!}\delta_{jk}. \label{ortho}
\end{equation}
Using these polynomials,
$$\Delta(x_1,...,x_m) = \det_{m\times m}\left(L_{j-1}^{(\alpha)}(x_k)\right) =\sum_{\sigma \in S_m}\text{sgn}(\sigma)\prod_{k=1}^{m}L_{\sigma(k-1)}^{(n-m)}(x_k).$$
The integral in equation (\ref{ToughestInt}) becomes
\begin{align}
    I_2 =& \frac{\sum_{l=1}^m\int_{[0,\infty)^m}x_l\ln x_l\prod_{1\le j < k \le m}^m(x_k-x_j)^2\prod_{j=1}^mx_j^{n-m}e^{-(x_1+...+x_m)}\prod_{j=1}^m dx_j}{mn\int_{[0,\infty)^m}\prod_{1\le j < k \le m}^m(x_k-x_j)^2 \prod_{j=1}^m x_j^{n-m}e^{-(x_1+...+x_m)}\prod_{j=1}^mdx_j} \nonumber \\
    %%%%%%%%%%%%%%%%%%%%%%%%%%%%%%%%%%
    = & \frac{\sum_{l=1}^m\int_{[0,\infty)^m}x_l\ln x_l\Delta(x_1,...,x_m)\Delta(x_1,...,x_m)\prod_{j=1}^mx_j^{n-m}e^{-x_j}dx_j}{mn\int_{[0,\infty)^m}\Delta(x_1,...,x_m)\Delta(x_1,...,x_m)\prod_{j=1}^mx_j^{n-m}e^{-x_j}dx_j} \nonumber \\
    %%%%%%%%%%%%%%%%%%%%%%%%%%%%%%%%%
    = & \frac{\sum_{l=1}^m\int_{[0,\infty)^m}x_l\ln x_l\sum_{\sigma,\tau \in S_m}\text{sgn}(\sigma)\text{sgn}(\tau)\prod_{k=1}^{m}L_{\sigma(k-1)}^{(n-m)}(x_k)L_{\tau(k-1)}^{(n-m)}(x_k)\prod_{j=1}^mx_j^{n-m}e^{-x_j}dx_j}{mn\int_{[0,\infty)^m}\sum_{\sigma,\tau \in S_m}\text{sgn}(\sigma)\text{sgn}(\tau)\prod_{k=1}^mL_{\sigma(k-1)}^{(n-m)}(x_k)L_{\tau(k-1)}^{(n-m)}(x_k)\prod_{j=1}^mx_j^{n-m}e^{-x_j}dx_j} \nonumber \\
    %%%%%%%%%%%%%%%%%%%%%%%%%%%%%%%%%%%
    = & \frac{\sum_{l=1}^m\sum_{\sigma,\tau \in S_m}\text{sgn}(\sigma)\text{sgn}(\tau)\int_{[0,\infty)^m}x_l\ln x_l\prod_{k=1}^mL_{\sigma(k-1)}^{(n-m)}(x_k)L_{\tau(k-1)}^{(n-m)}(x_k)x_k^{n-m}e^{-x_k}dx_k}{mn\sum_{\sigma,\tau \in S_m}\text{sgn}(\sigma)\text{sgn}(\tau)\int_{[0,\infty)^m}\prod_{k=1}^mL_{\sigma(k-1)}^{(n-m)}(x_k)L_{\tau(k-1)}^{(n-m)}(x_k)x_k^{n-m}e^{-x_k}dx_k} \nonumber \\
    %%%%%%%%%%%%%%%%%%%%%%%%%%%%%%%%%%%
    = & \frac{\sum_{l=1}^m\sum_{\sigma,\tau \in S_m}\text{sgn}(\sigma)\text{sgn}(\tau)\prod_{k=1}^m\int_0^{\infty}(x_k\ln x_k)^{\delta_{lk}} L_{\sigma(k-1)}^{(n-m)}(x_k)L_{\tau(k-1)}^{(n-m)}(x_k)x_k^{n-m}e^{-x_k}dx_k}{mn\sum_{\sigma,\tau \in S_m}\text{sgn}(\sigma)\text{sgn}(\tau)\prod_{k=1}^m\int_0^{\infty}L_{\sigma(k-1)}^{(n-m)}(x_k)L_{\tau(k-1)}^{(n-m)}(x_k)x_k^{n-m}e^{-x_k}dx_k} \nonumber \\
    %%%%%%%%%%%%%%%%%%%%%%%%%%%%%%%%%%%
    = & \frac{\sum_{l=1}^m\sum_{\sigma \in S_m}\prod_{k=1}^m\int_0^{\infty}(x_k\ln x_k)^{\delta_{lk}} [L_{\sigma(k-1)}^{(n-m)}(x_k)]^2x_k^{n-m}e^{-x_k}dx_k}{mn\sum_{\sigma \in S_m}\prod_{k=1}^m\int_0^{\infty}[L_{\sigma(k-1)}^{(n-m)}(x_k)]^2x_k^{n-m}e^{-x_k}dx_k} \text{(using the orthogonal property)} \nonumber \\
    %%%%%%%%%%%%%%%%%%%%%%%%%%%%%%%%%%%
    = & \frac{\sum_{l=1}^m\sum_{\sigma \in S_m}\int_0^{\infty}x_l^{n-m+1}\ln x_l[L_{\sigma(l-1)}^{(n-m)}(x_l)]^2e^{-x_l}dx_l\prod_{k=1,k\ne l}^m\int_0^{\infty} [L_{\sigma(k-1)}^{(n-m)}(x_k)]^2x_k^{n-m}e^{-x_k}dx_k}{mn\sum_{\sigma \in S_m}\prod_{k=1}^m\int_0^{\infty}[L_{\sigma(k-1)}^{(n-m)}(x_k)]^2x_k^{n-m}e^{-x_k}dx_k}\nonumber \\
    %%%%%%%%%%%%%%%%%%%%%%%%%%%%%%%%%%%
    = & \frac{\sum_{l=1}^m\sum_{\sigma \in S_m}\int_0^{\infty}x^{n-m+1}\ln x[L_{\sigma(l-1)}^{(n-m)}(x)]^2e^{-x}dx\prod_{k=1,k\ne l}^m\int_0^{\infty} [L_{\sigma(k-1)}^{(n-m)}(x)]^2x^{n-m}e^{-x}dx}{mn\sum_{\sigma \in S_m}\prod_{k=1}^m\int_0^{\infty}[L_{\sigma(k-1)}^{(n-m)}(x)]^2x^{n-m}e^{-x}dx} \nonumber\\
    %%%%%%%%%%%%%%%%%%%%%%%%%%%%%%%%%%
    = & \frac{\sum_{l=1}^m(m-1)!\sum_{k=1}^m\int_0^{\infty}x^{n-m+1}\ln x[L_{l-1}^{(n-m)}(x)]^2e^{-x}dx\prod_{k=1,k\ne l}^m\int_0^{\infty} [L_{k-1}^{(n-m)}(x)]^2x^{n-m}e^{-x}dx}{mn(m)!\prod_{j=1}^m\int_0^{\infty}[L_{j-1}^{(n-m)}(x)]^2x^{n-m}e^{-x}dx}
    \nonumber\\
    %%%%%%%%%%%%%%%%%%%%%%%%%%%%%%%%%%
    = & \sum_{l=1}^m\sum_{k=1}^m\frac{\int_0^{\infty}x^{n-m+1}\ln x[L_{l-1}^{(n-m)}(x)]^2e^{-x}dx}{m^2n\int_0^{\infty}[L_{l-1}^{(n-m)}(x)]^2x^{n-m}e^{-x}dx}
    \nonumber\\
    %%%%%%%%%%%%%%%%%%%%%%%%%%%%%%%%%%%
    = & m\sum_{l=1}^m\frac{\int_0^{\infty}x^{n-m+1}\ln x[L_{l-1}^{(n-m)}(x)]^2e^{-x}dx}{m^2n\int_0^{\infty}[L_{l-1}^{(n-m)}(x)]^2x^{n-m}e^{-x}dx}
    \nonumber\\
    %%%%%%%%%%%%%%%%%%%%%%%%%%%%%%%%%%%
    = & \frac{1}{mn}\sum_{k=0}^{m-1}\frac{\int_0^{\infty}x^{n-m+1}\ln x[L_{k}^{(n-m)}(x)]^2e^{-x}dx}{\int_0^{\infty}[L_{k}^{(n-m)}(x)]^2x^{n-m}e^{-x}dx}\label{I2penulimate}
\end{align}
Let $I_k^{(\alpha)} = \int_0^{\infty}x^{\alpha+1}\ln x[L_{k}^{(\alpha)}(x)]^2e^{-x}dx$ and $J_k(\alpha) = \int_0^{\infty}x^{\alpha+1}[L_{k}^{(\alpha)}(x)]^2e^{-x}dx$. By properties of Laguerre polynomials, we have 
\begin{equation}
    J_k(\alpha) = \frac{(2k+\alpha+1)\Gamma(k+\alpha+1)}{k!}. \label{jkdef}
\end{equation}
Now,
\begin{align}
    & \frac{d}{d\alpha}J_k(\alpha) = \int_0^{\infty}x^{\alpha+1}\ln x[L_{k}^{(\alpha)}(x)]^2e^{-x}dx + 2\int_0^{\infty}x^{\alpha+1}L_{k}^{(\alpha)}(x)\frac{d L_{k}^{(\alpha)}(x)}{d\alpha}e^{-x}dx \nonumber\\
    \implies & I_k^{(n-m)} = \left[\frac{d}{d\alpha}J_k(\alpha) -  2\int_0^{\infty}x^{\alpha+1}L_{k}^{(\alpha)}(x)\frac{d L_{k}^{(\alpha)}(x)}{d\alpha}e^{-x}dx\right]_{\alpha = n-m} \label{Ik}
\end{align}
Using equation (\ref{jkdef}), we get
\begin{align}
    \frac{d}{d\alpha}J_k(\alpha) & = \frac{d}{d\alpha}\left( \frac{(2k+\alpha+1)\Gamma(k+\alpha+1)}{k!}\right) \nonumber \\
    %%%%%%%%%%%%%%%%%%%%%
    & = \frac{\Gamma(k+\alpha+1)}{k!} + \frac{2k+\alpha+1}{k!}\frac{d\Gamma(k+\alpha+1)}{d\alpha} \nonumber\\
    %%%%%%%%%%%%%%%%%%%%%%%%
    & = \frac{\Gamma(k+\alpha+1)}{k!} + \frac{2k+\alpha+1}{k!}\Gamma(k+\alpha+1)\psi(k+\alpha+1)\nonumber\\
    %%%%%%%%%%%%%%%%%%
    & = \frac{\Gamma(k+\alpha+1)}{k!}[1+(2k+\alpha+1)\psi(k+\alpha+1)] \label{difJk}
\end{align}
We use the property $L_k^{(\alpha)}(x) = L_k^{(\alpha+1)}(x)-L_{k-1}^{(\alpha+1)}(x)$ and $\frac{d L_{k}^{(\alpha)}(x)}{d\alpha} = \sum_{j=0}^{k-1}\frac{L_j^{\alpha}(x)}{k-j}$ to compute 
\begin{align}
    & \int_0^{\infty}x^{\alpha+1}L_{k}^{(\alpha)}(x)\frac{d L_{k}^{(\alpha)}(x)}{d\alpha}e^{-x}dx \nonumber\\
    %%%%%%%%%%%%%%%%%%%%%%
    = & \int_0^{\infty}x^{\alpha+1}L_{k}^{(\alpha)}(x)\sum_{j=0}^{k-1}\frac{L_j^{\alpha}(x)}{k-j}e^{-x}dx\nonumber\\
    %%%%%%%%%%%%%%%%%%%%%%%%%%%%%%%%
    = & \sum_{j=0}^{k-1}\frac{1}{k-j}\int_0^{\infty}x^{\alpha+1}\left(L_k^{(\alpha+1)}(x)-L_{k-1}^{(\alpha+1)}(x)\right)\left(L_j^{(\alpha+1)}(x)-L_{j-1}^{(\alpha+1)}(x)\right)e^{-x}dx\nonumber\\
    = & -\int_0^{\infty}x^{\alpha+1}[L_{k-1}^{(\alpha+1)}(x)]^2e^{-x}dx \nonumber \\
    = & -\frac{\Gamma(k+\alpha+1)}{(k-1)!} \label{2ndTermIk}
\end{align}
Using (\ref{difJk}) and (\ref{2ndTermIk}) in (\ref{Ik}), we get
\begin{align}
    I_k^{(n-m)} & = \left[ \frac{\Gamma(k+\alpha+1)}{k!}[1+(2k+\alpha+1)\psi(k+\alpha+1)] + 2\frac{\Gamma(k+\alpha+1)}{(k-1)!}\right]_{\alpha=n-m}\nonumber \\
    & = \left[\frac{\Gamma(k+\alpha+1)}{k!}(1+2k+(2k+\alpha+1)\psi(k+\alpha+1))\right]_{\alpha=n-m} \nonumber \\
    & = \frac{\Gamma(k+n-m+1)}{k!}[1+2k+(2k+n-m+1)\psi(k+n-m+1)] \label{Ikfinal}
\end{align}
Using equation (\ref{ortho}) and (\ref{Ikfinal}) in equation (\ref{I2penulimate}), we get
\begin{align}
    I_2 = & \frac{1}{mn}\sum_{k=0}^{m-1}[1+2k+(2k+n-m+1)\psi(k+n-m+1)] \nonumber\\
    %%%%%%%%%%%%%%%%%%%%%%%%%%%%%%%%
    = & \frac{1}{mn}\sum_{0}^{m-1}(1+2k)+ \frac{1}{mn}\sum_{k=0}^{m-1}\left[(2k+n-m+1)\left(-\gamma+ \sum_{r=1}^{n-m+k}\frac{1}{r}\right)\right] \nonumber \\
    %%%%%%%%%%%%%%%%%%%%%%%%%%%%%%%
    = & \frac{m+m(m-1)}{mn} -\gamma\frac{1}{mn}\sum_{k=0}^{m-1}(2k+n-m+1)+\frac{1}{mn}\sum_{k=0}^{m-1}\sum_{r=1}^{n-m+k}\frac{2k+n-m+1}{r} \nonumber\\
    %%%%%%%%%%%%%%%%%%%%%%%%%%%%%
    = & \frac{m}{n}-\gamma+\left[mn+\frac{mn}{2}+...+\frac{mn}{n-m}+\frac{mn-(n-m+1)}{n-m+1}+...\right. \nonumber\\ &\left. \quad\quad+\frac{mn-(n-m+1)-(n-m+1+2)...-((n-m-1)+2(m-1)))}{n-m+m-1} \right]\times \frac{1}{mn}\nonumber\\
    %%%%%%%%%%%%%%%%%%%%%%%%%%%%%
    = & \frac{m}{n}-\gamma + \sum_{k=1}^{n-1}\frac{1}{k}-\frac{1}{mn}\left[\frac{n-m+1}{n-m+1}+....+\frac{(m-1)(n-m)+(m-1)^2}{n-m+m-1}\right]\nonumber\\
    %%%%%%%%%%%%%%%%%%%%%%%%%%%%%%%%%%
    = & \frac{m}{n}-\gamma + \sum_{k=1}^{n-1}\frac{1}{k}-\frac{1}{mn}\left[1+....+(m-1)\right]\nonumber\\
    %%%%%%%%%%%%%%%%%%%%%%%%%%%%%%%%%%%%%%%
    = & -\gamma + \sum_{k=1}^{mn}\frac{1}{k}-\sum_{k=n+1}^{mn}\frac{1}{k}+\frac{m-1}{2n}\nonumber\\
    %%%%%%%%%%%%%%%%%%%%%%%%%%%
    = &\psi(mn+1)-\sum_{k=n+1}^{mn}\frac{1}{k}+\frac{m-1}{2n}
    \label{I2final}
\end{align}
Using equation (\ref{I1final}), (\ref{I2final}), (\ref{I1I2split}) and (\ref{Convert2qs}), we get the expected value of Entropy of Entanglement over the pure states is
$$\sum_{k=n+1}^{mn}\frac{1}{k}-\frac{m-1}{2n}.$$
\end{proof}
%%%%%%%%%%%%%%%%%%%%%%%%%%%%%%%%%%%%%%%%%%%%%%%%%%%%%%%%%%%%%%%%%%%%%%%%%%%%%%%%%%%%%%%%%%%%%%%%%%%%

\subsection{Proof of Theorem \ref{mainth}} \label{mainproof}
For $j\in \{ 1,2\}$, let $(L_j,h_j)$ be a holomorphic hermitian line bundle on a compact 
K\"ahler manifold  $(M_j,\omega_j)$ of complex dimension $d_j\ge 1$
such that the curvature of the Chern connection on $L_j$ 
is $-i\omega_j$, and $d_1\le d_2$. For $N\in\N$, 
the Hilbert spaces $H_1$ (of dimension $m=m(N)$) and $H_2$ (of dimension $n=n(N)$) will be $H^0(M_1,L_1^N)$ and $H^0(M_2,L_2^N)$. Let $N\to\infty$. 
We have \cite[sec. 4.1.1.]{mmbook}: 
\begin{equation}
\label{dimas}
m=m(N)=
\beta_1N^{d_1}+
\gamma_1 N^{d_1-1}+
O(N^{d_1-2})
\end{equation}
\begin{equation}
\label{dimas2}
n=n(N)=
\beta_2N^{d_2}+
\gamma_2 N^{d_2-1}+
O(N^{d_2-2})
\end{equation}
where   
$$
\beta_j=\int_{M_j} \frac{c_1(L_j)^{d_j}}{d_j!}
$$
$$
\gamma_j=\frac{1}{2}\int_{M_j} \frac{c_1(TM_j)c_1(L_j)^{d_j-1}}{(d_j-1)!}
$$
for $j\in \{ 1,2\}$.

We notice that $m\le n$ for large  $N$. 
By Theorem \ref{averageth}, the average entanglement entropy $\langle E_N\rangle$ over all the pure states in $H^0(M_1,L_1^N)\otimes H^0(M_2,L_2^N)$ equals 
\begin{equation}
\label{entr}
\Bigl (\sum _{k=n+1}^{mn}\frac{1}{k}\Bigr ) - \frac{m-1}{2n}. 
\end{equation}
To figure out the asymptotics of (\ref{entr}), 
we apply the Euler-Maclaurin formula to $f(x)=\frac{1}{x}$, to conclude that 
$$
\sum _{k=n+1}^{mn}\frac{1}{k}=\int_n^{mn}\frac{1}{x}dx+\frac{f(mn)-f(n)}{2}+
\sum_{k=1}^{[\frac{p}{2}]} \frac{B_{2k}}{(2k)!}(f^{(2k-1)}(mn)-f^{(2k-1)}(n))+R_p
$$
where $B_{2k}$ are the Bernoulli numbers, in particular $B_2=\frac{1}{6}$, and for the remainder we have the estimate 
$$
|R_p|\le \frac{2\zeta(p)}{(2\pi )^p}\int _n^{mn} |f^{(p)}(x)|dx. 
$$
Therefore, $\langle E_N\rangle$ becomes 
\begin{equation} \label{AvEntLast}
\ln m +\frac{1}{2mn}-\frac{m}{2n}+\sum_{k=1}^{[\frac{p}{2}]} \frac{B_{2k}}{(2k)!}(f^{(2k-1)}(mn)-f^{(2k-1)}(n))+R_p.    \end{equation}

In (\ref{AvEntLast}), let us set $p=2$ in the part 
$$
\frac{1}{2mn}+\sum_{k=1}^{[\frac{p}{2}]} \frac{B_{2k}}{(2k)!}(f^{(2k-1)}(mn)-f^{(2k-1)}(n))+R_p
$$ 
and we can now conclude that this part is 
$O(N^{-2d_2})$, because  
$$
|R_2|\le \frac{\zeta(2)}{2\pi ^2}\int _n^{mn} |f''(x)|dx= \frac{1}{12}(\frac{1}{n^2}-\frac{1}{m^2n^2}), 
$$
$$
f'(mn)-f'(n)=-\frac{1}{m^2n^2}+\frac{1}{n^2}
$$
and by (\ref{dimas},\ref{dimas2}). 

It remains to consider the term  $\ln m -\frac{m}{2n}$ in (\ref{AvEntLast}). By (\ref{dimas}) 
$$
\ln m=\ln (\beta_1N^{d_1}(1+\frac{\gamma_1}{\beta_1}\frac{1}{N}+O(\frac{1}{N^2}))\sim 
\ln\beta_1+d_1\ln N+\frac{\gamma_1}{\beta_1}\frac{1}{N}+O(\frac{1}{N^2}).
$$
If $d_1=d_2$, then by  (\ref{dimas},\ref{dimas2}), we get 
$$
\frac{m}{2n}=\frac{\beta_1(1+\frac{\gamma_1}{\beta_1}\frac{1}{N}+ O(\frac{1}{N^2})) }
{2\beta_2(1+\frac{\gamma_2}{\beta_2}\frac{1}{N}+ O(\frac{1}{N^2})) }\sim 
\frac{\beta_1}{2\beta_2}\Bigl ( 1+(\frac{\gamma_1}{\beta_1}-\frac{\gamma_2}{\beta_2} ) \frac{1}{N} \Bigr )+ O(\frac{1}{N^2}).
$$
Similarly, if $d_1=d_2-1$, then 
$$
\frac{m}{2n}=\frac{\beta_1(1+\frac{\gamma_1}{\beta_1}\frac{1}{N}+ O(\frac{1}{N^2})) }
{2\beta_2N(1+\frac{\gamma_2}{\beta_2}\frac{1}{N}+ O(\frac{1}{N^2})) }\sim 
\frac{\beta_1}{2\beta_2}\frac{1}{N} + O(\frac{1}{N^2}),
$$
and if $d_1-d_2\le -2$, then 
$$
\frac{m}{2n}\sim O(\frac{1}{N^2}). 
$$
The statement of the theorem follows.


\begin{thebibliography}{99}

\bibitem{barronp}T. Barron and T. Pollock,  
{\it K\"ahler quantization and entanglement}, 
Rep. Math. Phys. {\bf 80} (2017), no. 2, 217--231.

\bibitem{charles}L.  Charles and  B. Estienne,  
{\it Entanglement entropy and Berezin-Toeplitz operators}, 
Comm. Math. Phys. {\bf 376} (2020), no. 1, 521--554.

\bibitem{Ad3:2005}A. Edelman, {\it  Jacobians of matrix transforms (with wedge products), 
February 2005}, 
Class Notes: Finite Random Matrix Theory, Handout \#3.
http://web.mit.edu/18.325/www/handouts.html

\bibitem{Ad4:2005}A. Edelman, {\it  Volumes and integration, 
 March 2005}, 
Class Notes: Finite Random Matrix Theory, Handout \#4.
http://web.mit.edu/18.325/www/handouts.html

\bibitem{mmbook}X. Ma and G. Marinescu,  
{\it Holomorphic Morse inequalities and Bergman kernels}, 
Progr. Math. 254,
Birkh\"auser Verlag, Basel, 2007.

\bibitem{page}D. Page, 
{\it Average entropy of a subsystem},  
Phys. Rev. Lett. {\bf 71} (1993), no. 9, 1291--1294.

\bibitem{sen}S. Sen,  
{\it Average entropy of a quantum subsystem},  
Phys. Rev. Lett. {\bf 77} (1996), no. 1, 1--3.


\bibitem{shifzeld}B. Shiffman and S. Zelditch,  
{\it Distribution of zeros of random and quantum chaotic sections of positive line bundles}, 
Commun. Math. Phys. {\bf 200} (1999), no. 3, 661--683.

\bibitem{songzeld2012}J. Song and S. Zelditch,  
{\it Test configurations, large deviations and geodesic rays on toric varieties},  
Adv. Math. {\bf 229} (2012), no. 4, 2338--2378.

\bibitem{zeld2009}S. Zelditch,  
{\it Bernstein polynomials, Bergman kernels and toric K\"ahler varieties}, 
J. Symplectic Geom. {\bf 7} (2009), no. 2, 51--76.

\bibitem{zelditchf}S. Zelditch and P. Flurin,  
{\it Entropy of Bergman measures of a toric Kaehler manifold},  
Pure Appl. Math. Q. {\bf 18} (2022), no. 1, 269--303.

\bibitem{zeldzhou2021}S. Zelditch and P. Zhou,  
{\it Central limit theorem for toric K\"ahler manifolds},  
Pure Appl. Math. Q. {\bf 17} (2021), no. 3, 843--864.

\bibitem{Zh:2017}Lin Zhang,  
{\it Volumes of orthogonal groups and unitary groups},  
https://arxiv.org/abs/1509.00537














\end{thebibliography}
\end{document}